\documentclass[12pt,reqno]{article}
\usepackage{graphicx}
\usepackage{amssymb}
\usepackage{amsmath}
\usepackage{amsthm}
\usepackage{amsfonts}
\usepackage{url}
\usepackage{hyperref}
\usepackage{breakurl}
\usepackage[T1]{fontenc}        

\newcommand{\comment}[1]{}
\newcommand{\raisecomma}{\raisebox{2pt}{$,$}}
\newcommand{\raisedot}{\raisebox{2pt}{$.$}}


\newcommand{\R}{{\mathbb R}}
\newcommand{\N}{{\mathbb N}}
\newcommand{\Npos}{{\N^{*}}}

\newcommand{\Z}{{\mathbb Z}}

\newcommand{\Pbar}{{\overline P}}
\newcommand{\Qbar}{{\overline Q}}
\newcommand{\half}{{\mbox{$\frac{1}{2}$}}} 
\newcommand{\nhalf}{{\mbox{$\frac{n}{2}$}}} 

\newtheorem{theorem}{Theorem}
\newtheorem{corollary}{Corollary}
\newtheorem{lemma}{Lemma}
\newtheorem{proposition}{Proposition}

\newtheorem{remark}{Remark}

\allowdisplaybreaks	

\begin{document}
\bibliographystyle{plain}
\title{Generalising Tuenter's binomial sums}
\author{Richard P.\ Brent\\
Mathematical Sciences Institute\\
Australian National University\\
Canberra, ACT 0200,
Australia\\
{\tt centred-sums@rpbrent.com}\\
}

\date{\today}

\maketitle
\thispagestyle{empty}                   

\pagebreak[4]

\begin{abstract}
Tuenter [\emph{Fibonacci Quarterly} 40 (2002), 175-180] and other authors
have considered centred binomial sums of the form
\[S_r(n) = \sum_k \binom{2n}{k}|n-k|^r,\]
where $r$ and $n$ are non-negative integers.
We consider sums of the form
\[U_r(n) = \sum_k \binom{n}{k}|n/2-k|^r\]
which are a generalisation of Tuenter's sums
as $S_r(n) = U_r(2n)$ but $U_r(n)$ is also
well-defined for odd arguments~$n$.
$U_r(n)$ may be interpreted as a moment of a symmetric Bernoulli random
walk with $n$ steps. 
The form of $U_r(n)$ depends on the parities of both $r$ and~$n$.
In fact, $U_r(n)$ is the product of a polynomial (depending on the parities
of~$r$ and $n$) times a power of two or a binomial coefficient. In all cases
the polynomials can be expressed in terms of Dumont-Foata polynomials. We
give recurrence relations, generating functions and explicit formulas for 
the functions $U_r(n)$ and/or the associated polynomials.
\end{abstract}

\bigskip
{\small
\noindent\emph{Keywords:}
Bernoulli random walks,
binomial sum identities, 
Catalan numbers,
Dumont-Foata polynomials,
explicit formulas,
generating functions,
Genocchi\linebreak numbers,
moments,
polynomial interpolation,
secant numbers,
tangent numbers
}
\smallskip

{\small
\noindent\emph{MSC classes:} 05A10, 11B65 (Primary);
 05A15, 05A19, 44A60, 60G50 (Secondary)
}

\pagebreak[4]
\section{Introduction}		\label{sec:intro}

We consider centred binomial sums of the form
\begin{equation}		\label{eq:U_def}
U_r(n) = \sum_k \binom{n}{k}\left|\frac{n}{2}-k\right|^r,
\end{equation}
where $r \in \N$ and $n \in \Z$.
These generalise the binomial sums
\begin{equation}		\label{eq:S_def}
S_r(n) = \sum_k \binom{2n}{k}|n-k|^r
\end{equation}
previously considered by Tuenter~\cite{Tuenter1}
and other authors~\cite{Best,rpb255,Bruckman,GZ,Hillman},
since $S_r(n) = U_r(2n)$ but $U_r(n)$ is well-defined for both even and odd
values of~$n$.
The generalisation arises naturally in the study of certain two-fold
centred binomial sums~\cite{BOOP} of the form
$\sum_j\sum_k\binom{2n}{n+j}\binom{2n}{n+k}P(j,k)$.

In definitions such as~\eqref{eq:U_def} and~\eqref{eq:S_def} we always
interpret $0^0$ as $1$. 
Thus $U_0(n) = 2^n$ and $S_0(n) = 2^{2n}$ for all $n\in \N$.
By our summation convention (see~\S\ref{subsec:notation}),
we have $U_r(n) = S_r(n) = 0$ if $n < 0$. Thus, in the following
we assume that $n\ge 0$.

For $r > 0$ we can avoid the absolute value function
in~\eqref{eq:U_def} by writing
\[U_r(n) = 2\sum_{k < n/2} \binom{n}{k}\left(\frac{n}{2}-k\right)^r.\]

Tuenter~\cite{Tuenter1}
showed in a direct manner that, for $r \ge 0$ and $n > 0$, $S_r(n)$
satisfies the recurrence
\begin{equation}	\label{eq:Tuenter_rec0}
S_{r+2}(n) = n^2 S_r(n) - 2n(2n-1)S_r(n-1).
\end{equation}
Observe that this recurrence splits into two separate recurrences, one
involving odd values of~$r$ and the other involving even values of~$r$.
Also, 
$S_0(n) = 2^{2n}$ and $S_1(n) = n\binom{2n}{n}$ (see for example
\cite{rpb255,Hillman}).
It follows from~\eqref{eq:Tuenter_rec0} that
\begin{equation}		\label{eq:SPQ}
S_{2r}(n) = Q_r(n)2^{2n-r}, \;\; S_{2r+1}(n) = P_r(n)n\binom{2n}{n},
\end{equation}
where $P_r(n)$ and $Q_r(n)$ are polynomials of degree~$r$ with integer
coefficients, satisfying the recurrences
\begin{align}	\label{eq:Prec}
P_{r+1}(n) &= n^2P_r(n) - n(n-1)P_r(n-1),\\
		\label{eq:Qrec}
Q_{r+1}(n) &= 2n^2Q_r(n) - n(2n-1)Q_r(n-1)
\end{align}
for $r \ge 0$,
with initial conditions $P_0(n) = Q_0(n) = 1$.
The polynomials $P_r, Q_r$ for $0 \le r \le 5$ are given in Appendix~$1$.

The Dumont-Foata polynomials $F_r(x,y,z)$ are $3$-variable polynomials
satisfying the recurrence relation
\begin{equation}		\label{eq:DF-recurrence}
F_{r+1}(x,y,z) = (x+z)(y+z)F_r(x,y,z+1) - z^2F_r(x,y,z)
\end{equation}
for $r \ge 1$, with $F_1(x,y,z) = 1$.
Dumont and Foata~\cite{DF} gave a combinatorial interpretation for the
coefficients of $F_r(x,y,z)$ and showed that $F_r(x,y,z)$ is symmetric in the
three variables $x,y,z$.

Tuenter~\cite{Tuenter1} showed that $P_r(n)$ and $Q_r(n)$ may be expressed
in terms of Dumont-Foata polynomials.
In fact, for $r \ge 1$,
$P_r(n) = (-)^{r-1}nF_r(1,1,-n)$
and 
$Q_r(n) = (-2)^{r-1}nF_r(\half,1,-n)$.
Thus, we can obtain explicit formulas and generating functions for
the polynomials $P_r(n)$ and $Q_r(n)$ as special cases of the
results of Carlitz~\cite{Carlitz} on Dumont-Foata polynomials.

We can obtain explicit formulas for $S_{2r}(n)$ and $S_{2r-1}(n)$ by
using Carlitz's results for Dumont-Foata polynomials~--
see Theorem~\ref{thm:explicit} in~\S\ref{sec:main}\footnote{In fact,
our Theorem~\ref{thm:explicit} is more general,
since it covers $U_r(n)$ for both even and odd~$n$.
}.
We note that these formulas are different from
the explicit formulas~\eqref{eq:GZ7.1}--\eqref{eq:GZ3.2} of
Guo and Zeng~\cite{GZ}, which are discussed in
Remark~\ref{remark:other_explicit}.

We show that all the above results for $S_r(n)$ can be generalised
to cover $U_r(n)$. In particular, Theorem~\ref{thm:recurrence} shows that
$U_r(n)$ satisfies a recurrence~\eqref{eq:Urec}
similar to the recurrence~\eqref{eq:Tuenter_rec0} satisfied by $S_r(n)$.
Theorem~\ref{thm:polynomials} shows that
$U_r(n)$ is the product
of a polynomial in~$n$ times a power of two or a binomial coefficient,
depending on the parity of $r$, as in~\eqref{eq:SPQ}.
These polynomials can be expressed in terms of Dumont-Foata polynomials,
so the results of Carlitz allow us to obtain explicit formulas
for $U_r(n)$ such as those given in Theorem~\ref{thm:explicit},
and to obtain new exponential generating functions (\emph{egfs})
such as~\eqref{eq:U_even_gf}
and~\eqref{eq:U_odd_even_egf}--\eqref{eq:U_odd_odd_egf}
in~\S\ref{sec:egfs}.
We give some additional explicit formulas in~\S\ref{sec:explicit}, and
consider the asymptotic behaviour of $U_r(n)$ as $n\to\infty$
in~\S\ref{sec:asymptotics}.

\subsection*{Acknowledgement}

We thank Hideyuki Ohtsuka for informing us of the 
paper 
\cite{GZ}.
This research was supported by Australian Research Council grant DP140101417.

\pagebreak[3]

\subsection{Notation}		\label{subsec:notation}

The set of non-negative integers is denoted by $\N$, and the
set of positive integers by $\Npos$. 

For $k \in \N$ and $x\in\R$
we denote the \emph{Pochhammer symbol}
or \emph{rising factorial}
by
\[(x)_k := x(x+1)\cdots(x+k-1),\]
with the special case $(x)_0 = 1$. 
The falling factorial may be written as $(x+1-k)_k$ or 
$(-)^k(-x)_k$, where we use $(-)^k$ as an abbreviation for $(-1)^k$.

The binomial coefficient $\binom{n}{k}$ is defined\footnote{Guo
and Zeng~\cite{GZ} implicitly define the binomial coefficient
to be zero if $-k \in \Npos$, $n\in\Z$, and 
$n(n-1)\cdots(n-k+1)/k!$ if $k \in \N$, $n\in\Z$.
We do not use this definition because it is
incompatible with our convention of summing over
all $k\in\Z$.}
for all $k \in \Z$ and $n \in \N$ by
\[\binom{n}{k} := \begin{cases}
		  0 \text{ if } k < 0 \text{ or } k > n;\\
	 	  \frac{n!}{(n-k)!\,k!} \text{ otherwise.}
		  \end{cases}
\]
Thus we can often write sums over all $k\in\Z$ 
without explicitly giving upper and lower limits on~$k$.

\section{Main results}		\label{sec:main}

Our main results on $U_r(n)$ are summarised in the following
Theorems~\ref{thm:recurrence}--\ref{thm:explicit}.
The recurrence~\eqref{eq:Urec} in
Theorem~\ref{thm:recurrence} implies
the recurrence~\eqref{eq:Tuenter_rec0} satisfied by
$S_r(n)$, since~\eqref{eq:Tuenter_rec0}
follows on replacing $n$ by $2n$ in~\eqref{eq:Urec}.

\begin{theorem}		\label{thm:recurrence}
For all $r, n \in \N$, $U_r(n)$ satisfies the recurrence
\begin{equation}			\label{eq:Urec}
4U_{r+2}(n) = n^2U_r(n) - 4n(n-1)U_r(n-2),
\end{equation}
and  may be computed from the recurrence using the initial
conditions
\[
U_0(n) = 2^n,\;
U_1(2n) = n\binom{2n}{n},\;
U_1(2n+1) = (2n+1)\binom{2n}{n}
\text{ for all } n \in \N.
\]
\end{theorem}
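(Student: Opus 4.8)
The plan is to establish the recurrence~\eqref{eq:Urec} by a direct manipulation of the defining sum~\eqref{eq:U_def}, and then to verify the three initial conditions separately. The starting observation is that $4(n/2-k)^2 = (n-2k)^2$, so that $(n-2k)^2\,|n/2-k|^r = 4\,|n/2-k|^{r+2}$ holds for every $k$ (including $k=n/2$ when $n$ is even, where both sides vanish). Hence
\begin{equation*}
4U_{r+2}(n) = \sum_k \binom{n}{k}(n-2k)^2\left|\frac{n}{2}-k\right|^r.
\end{equation*}

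The next step is to expand $(n-2k)^2 = n^2 - 4k(n-k)$. The term $n^2$ reproduces $n^2U_r(n)$, while the remaining piece is handled by the identity $k(n-k)\binom{n}{k} = n(n-1)\binom{n-2}{k-1}$, an immediate consequence of the factorial definition of $\binom{n}{k}$. This converts $-4\sum_k k(n-k)\binom{n}{k}|n/2-k|^r$ into $-4n(n-1)\sum_k \binom{n-2}{k-1}|n/2-k|^r$; after relabelling the summation index ($j=k-1$) and using $n/2-1 = (n-2)/2$, the remaining sum is exactly $U_r(n-2)$, which gives~\eqref{eq:Urec}. The summation convention of~\S\ref{subsec:notation} makes the relabelling clean, since $\binom{n-2}{k-1}$ vanishes outside its support; moreover the factor $4n(n-1)$ vanishes for $n\in\{0,1\}$, in agreement with $U_r(n-2)=0$ for $n<2$, so the recurrence is valid for all $n\ge 0$ including the small cases.

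For the initial conditions, $U_0(n)=2^n$ is immediate from $\sum_k\binom{n}{k}=2^n$ (with $0^0=1$), and $U_1(2n)=n\binom{2n}{n}$ is simply the known value of $S_1(n)$ recorded near~\eqref{eq:SPQ}, since $S_1(n)=U_1(2n)$. The one genuine computation is $U_1(2n+1)$. Using the absolute-value-free form $U_1(N)=2\sum_{k<N/2}\binom{N}{k}(N/2-k)$ with $N=2n+1$, I would split it as $N\sum_{k=0}^{n}\binom{N}{k} - 2\sum_{k=0}^{n}k\binom{N}{k}$, evaluate the first sum as $2^{2n}$ by the symmetry $\binom{2n+1}{k}=\binom{2n+1}{2n+1-k}$, and evaluate the second via $k\binom{N}{k}=N\binom{N-1}{k-1}$ together with $\sum_{j=0}^{n-1}\binom{2n}{j}=\bigl(2^{2n}-\binom{2n}{n}\bigr)/2$. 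The two $2^{2n}$ contributions cancel, leaving $(2n+1)\binom{2n}{n}$.

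Finally I would note that~\eqref{eq:Urec} relates $U_{r+2}$ to $U_r$ and therefore decouples into separate even-$r$ and odd-$r$ families, so that the data $U_0$ and $U_1$ (for all arguments) determine every $U_r(n)$. I expect the main obstacle to be purely bookkeeping: keeping the absolute value and the $0^0=1$ convention straight at the boundary term $k=n/2$, and ensuring that the index shift carries the argument from $n$ down to $n-2$ rather than $n-1$. None of these steps is deep, but the argument-halving (in contrast to Tuenter's $n\to n-1$) is precisely where the dependence of $U_r(n)$ on the parity of~$n$ originates, so care is warranted there.
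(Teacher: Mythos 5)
Your proposal is correct and follows essentially the same route as the paper: both expand $(n-2k)^2 = n^2 - 4k(n-k)$ and use the identity $k(n-k)\binom{n}{k} = n(n-1)\binom{n-2}{k-1}$ with an index shift to identify $U_r(n-2)$ (the paper runs this computation from $4n(n-1)U_r(n-2)$ forward, you run it backward, which is the same argument). The only cosmetic difference is in the initial conditions, where the paper computes $U_1(n)=n\binom{n-1}{\lfloor n/2\rfloor}$ uniformly for both parities via a telescoping sum from the Putnam solution, while you cite the known $S_1(n)$ for even arguments and evaluate $U_1(2n+1)$ directly by splitting the sum; your evaluation checks out.
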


\begin{proof}
We have
\begin{align}
4U_{r+2}(n) &= \sum_k 4\binom{n}{k}\left|\frac{n}{2}-k\right|^{r+2}
	= \sum_k \binom{n}{k}\left|\frac{n}{2}-k\right|^{r}(n-2k)^2,
	\label{eq:rec1}
\end{align}
\begin{align}
n^2U_r(n) &= \sum_k\binom{n}{k}\left|\frac{n}{2}-k\right|^{r}n^2,
	\label{eq:rec2}
\end{align}
and
\begin{align}
4n(n-1)U_r(n-2) &= 
 \sum_k 4n(n-1)\binom{n-2}{k}\left|\frac{n-2}{2}-k\right|^r
	\nonumber\\
 &= \sum_k 4n(n-1)\binom{n-2}{k-1}\left|\frac{n}{2}-k\right|^{r}
	\nonumber\\
 &= \sum_k 4k(n-k)\binom{n}{k}\left|\frac{n}{2}-k\right|^{r}.
	\label{eq:rec3}
\end{align}
Since $(n-2k)^2 - n^2 - 4k(n-k) = 0$, the recurrence~\eqref{eq:Urec}
follows from~\eqref{eq:rec1}--\eqref{eq:rec3}.

For the initial values, we easily verify that $U_0(n) = 2^n$.
Also, 
the ``official'' solution~\cite{Hillman} to the Putnam problem 35-A4
gives
\begin{align*}
U_1(n) 
 &= \sum_k\binom{n}{k}\left|\frac{n}{2}-k\right|
  = \sum_{k < n/2}\binom{n}{k}(n-2k)\\
 &= \sum_{k < n/2}\left\{\binom{n}{k}(n-k) - \binom{n}{k}k\right\}\\
 &= \sum_{k < n/2}\left\{\binom{n-1}{k}n - \binom{n-1}{k-1}n\right\}\\
 &= n\sum_{k < n/2}\left\{\binom{n-1}{k}-\binom{n-1}{k-1}\right\}\\
 &= n\binom{n-1}{\lfloor n/2 \rfloor}.
\end{align*}
Thus, 
\[U_1(2n) = 2n\binom{2n-1}{n} = n\binom{2n}{n}\]
and
\[U_1(2n+1) = (2n+1)\binom{2n}{\lfloor(2n+1)/2\rfloor}
	    = (2n+1)\binom{2n}{n}.\]
\end{proof}

Theorem~\ref{thm:polynomials} shows that
$U_r(n)$ can be expressed as the product of a polynomial in~$n$
multiplied by a simple non-polynomial function of~$r$ and~$n$. 
There are four cases, depending on the parities
of~$r$ and~$n$, although only three of the cases
are essentially different.

\begin{theorem}		\label{thm:polynomials}
For $r\in\N$ there
exist polynomials $P_r(n), \Pbar_r(n), Q_r(n), \Qbar_r(n)$
of degree~$r$ over $\Z$, such that, for all $n\in\Npos$,
\begin{align}
U_{2r+1}(2n) &= nP_r(n)\binom{2n}{n},
	\label{eq:U_odd_even_P}\\
U_{2r+1}(2n-1) &= 2^{-(2r+1)}n\Pbar_r(n)\binom{2n}{n},
	\label{eq:U_odd_odd_Pbar}\\
U_{2r}(2n) &= 2^{2n-r}Q_r(n),
	\label{eq:U_even_even_Q}\\
U_{2r}(2n+1) &= 2^{2n+1-2r}\Qbar_r(n).
	\label{eq:U_even_odd_Qbar}
\end{align}
The polynomials satisfy the following recurrence relations:
\begin{align}
P_{r+1}(n) &= n^2 P_r(n) -n(n-1)P_r(n-1),
 \label{eq:P_rec}\\
\Pbar_{r+1}(n) &= (2n-1)^2\,\Pbar_r(n) - 4(n-1)^2\,\Pbar_r(n-1),
 \label{eq:Pbar_rec}\\
Q_{r+1}(n) &= 2n^2 Q_r(n) - n(2n-1)Q_r(n-1),
 \label{eq:Q_rec}\\
\Qbar_{r+1}(n) &= (2n+1)^2\,\Qbar_r(n) - 2n(2n+1)\Qbar_r(n-1),
 \label{eq:Qbar_rec}
\end{align}
with initial conditions
$P_0(n) = \Pbar_0(n) = Q_0(n) = \Qbar_0(n) = 1$.
\end{theorem}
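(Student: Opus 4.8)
The plan is to establish the four representations in~\eqref{eq:U_odd_even_P}--\eqref{eq:U_even_odd_Qbar} together with the recurrences~\eqref{eq:P_rec}--\eqref{eq:Qbar_rec} simultaneously by induction on~$r$, using the recurrence~\eqref{eq:Urec} from Theorem~\ref{thm:recurrence} as the engine. The key structural observation is that~\eqref{eq:Urec} couples $U_{r+2}$ to $U_r$ at two arguments, $n$ and $n-2$, which preserves the parity of $n$; since it also preserves the parity of~$r$, the four cases (parity of $r$ times parity of $n$) evolve independently. This is precisely why four separate polynomial families appear, and it means each recurrence~\eqref{eq:P_rec}--\eqref{eq:Qbar_rec} can be extracted from the single master recurrence~\eqref{eq:Urec} by substituting the appropriate representation and the appropriate argument.

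First I would handle the even-$r$ cases, which feed off $U_0(n)=2^n$. For the representation $U_{2r}(2n)=2^{2n-r}Q_r(n)$ I substitute into~\eqref{eq:Urec} with $r\mapsto 2r$ and argument~$2n$, giving $4\cdot 2^{2(n)-(r+1)}Q_{r+1}(n) = (2n)^2 2^{2n-r}Q_r(n) - 4\cdot 2n(2n-1)\,2^{2(n-1)-r}Q_r(n-1)$; dividing through by the common power of two yields exactly~\eqref{eq:Q_rec}. The odd-argument even-$r$ case $U_{2r}(2n+1)=2^{2n+1-2r}\Qbar_r(n)$ is treated the same way with argument~$2n+1$ in~\eqref{eq:Urec}, where the factor $(2n+1)^2$ and the term $2n(2n+1)$ arise from $n^2$ and $n(n-1)$ evaluated at the odd argument, producing~\eqref{eq:Qbar_rec}. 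In each case one also checks that the stated power of two correctly absorbs the factor~$4$ on the left and the powers on the right, so that what remains is a genuine polynomial recurrence with integer coefficients and $Q_0=\Qbar_0=1$.

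Next I would treat the odd-$r$ cases, which are anchored by the two initial values $U_1(2n)=n\binom{2n}{n}$ and $U_1(2n+1)=(2n+1)\binom{2n}{n}$ established in Theorem~\ref{thm:recurrence}. For $U_{2r+1}(2n)=nP_r(n)\binom{2n}{n}$ I substitute into~\eqref{eq:Urec} with $r\mapsto 2r+1$ and argument~$2n$; the only subtlety is that the term $U_r(n-2)$ becomes $U_{2r+1}(2n-2)=(n-1)P_r(n-1)\binom{2n-2}{n-1}$, so I must rewrite the binomial coefficients using the identity $\binom{2n-2}{n-1}=\frac{n}{2(2n-1)}\binom{2n}{n}$ to pull out a common factor of $\binom{2n}{n}$; once that is done, dividing by $4n\binom{2n}{n}$ gives~\eqref{eq:P_rec}. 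The odd-odd case $U_{2r+1}(2n-1)=2^{-(2r+1)}n\Pbar_r(n)\binom{2n}{n}$ proceeds identically at argument~$2n-1$, with the coefficients $(2n-1)^2$ and $4(n-1)^2$ emerging from $n^2$ and $n(n-1)$ at the odd argument and the binomial ratio again reconciling the shifted arguments, yielding~\eqref{eq:Pbar_rec}.

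I expect the main obstacle to be bookkeeping in the odd-$r$ cases rather than any conceptual difficulty: the mismatch between $\binom{2n}{n}$ at argument $n$ and the binomial coefficient appearing in the $U_r(n-2)$ term, together with the half-integer powers of two in $\Pbar_r$, requires careful use of binomial identities such as $\binom{2n-2}{n-1}=\frac{n}{2(2n-1)}\binom{2n}{n}$ to verify that every non-polynomial factor cancels and leaves a polynomial identity with integer coefficients. A secondary point worth checking is that the claimed degree of each polynomial is exactly~$r$ and that integrality is preserved: the recurrences~\eqref{eq:P_rec}--\eqref{eq:Qbar_rec} manifestly map integer polynomials of degree~$r$ to integer polynomials of degree~$r+1$ (the leading terms add rather than cancel), so an easy induction confirms both the integrality over~$\Z$ and the degree claim once the base cases $r=0$ are verified against the initial values of~$U_1$ and $U_0$.
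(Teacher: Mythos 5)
Your proposal is correct and takes essentially the same route as the paper: define the four families by \eqref{eq:U_odd_even_P}--\eqref{eq:U_even_odd_Qbar}, note that the master recurrence \eqref{eq:Urec} preserves the parities of both $r$ and $n$, substitute each representation into it, and cancel the non-polynomial factors (powers of two in the even-$r$ cases, and $\binom{2n-2}{n-1}=\frac{n}{2(2n-1)}\binom{2n}{n}$ in the odd-$r$ cases) to extract \eqref{eq:P_rec}--\eqref{eq:Qbar_rec}; the paper works only the $P_r$ case in detail and declares the other three ``similar'', whereas you sketch all four, correctly. One genuine slip, though: your parenthetical justification of the degree claim is backwards. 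In each of \eqref{eq:P_rec}--\eqref{eq:Qbar_rec} both products on the right have degree $r+2$, and their top coefficients \emph{cancel} (if they ``added'' the degree would be $r+2$, not $r+1$); the degree claim instead requires checking that the coefficient of $n^{r+1}$ survives the cancellation, e.g.\ in \eqref{eq:P_rec} writing $P_r(n)=c_r n^r+\cdots$ gives $P_{r+1}(n)=(r+1)c_r n^{r+1}+\cdots$, whence $c_r=r!$, and similarly one gets leading coefficients $(2r+1)c_r$, $4(r+1)c_r$ and $2(2r+1)c_r$ for $Q$, $\Pbar$ and $\Qbar$, all nonzero and consistent with Table~$1$. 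This is a one-line fix, and to be fair the paper's own proof never verifies the degree explicitly either.
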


\begin{proof}
For $r\in\N$ and $n \in \Npos$ we \emph{define} functions
$P_r(n),\Pbar_r(n),Q_r(n),\Qbar_r(n)$ by
\eqref{eq:U_odd_even_P}--\eqref{eq:U_even_odd_Qbar} respectively.
Using the second half of Theorem~\ref{thm:recurrence}, it is easy to
see that $P_0(n) = \Pbar_0(n) = Q_0(n) = \Qbar_0(n) = 1$. Thus, it only
remains to show that $P_r(n),\Pbar_r(n),Q_r(n)$ and $\Qbar_r(n)$ satisfy
the claimed recurrences~\eqref{eq:P_rec}--\eqref{eq:Qbar_rec},
since these recurrences enable us to show by induction on~$r$ that
the functions $P_r(n),\Pbar_r(n),Q_r(n)$ and $\Qbar_r(n)$ are
polynomials over~$\Z$.

First consider the recurrence~\eqref{eq:P_rec} for $P_r(n)$.
Replacing $n$ by $2n$ and $r$ by $2r+1$ in the recurrence~\eqref{eq:Urec},
we obtain
\[U_{2r+3}(2n) = n^2U_{2r+1}(2n)-2n(2n-1)U_{2r+1}(2n-2),\]
and in view of~\eqref{eq:U_odd_even_P} this implies
\begin{equation}	\label{eq:Prec_unsimplified}
nP_{r+1}(n)\binom{2n}{n} = 
  n^2P_r(n)\binom{2n}{n} - 2n(2n-1)(n-1)P_r(n-1)\binom{2n-2}{n-1}.
\end{equation}
Now, dividing each side of~\eqref{eq:Prec_unsimplified} by $n\binom{2n}{n}$
and using
$\binom{2n-2}{n-1} = \frac{n}{2(2n-1)}\binom{2n}{n}$, we
obtain~\eqref{eq:P_rec}.  The other three cases
are similar.
\end{proof}

Lemma~\ref{lemma:DF} expresses the four families of polynomials
$P_r(n),\ldots,\Qbar_r(n)$ in terms of Dumont-Foata polynomials,
and incidentally shows that only three of the four cases are
essentially different, since $\Qbar_r(n)$ is just a shifted and
scaled version of $Q_r(n)$.
\begin{lemma}		\label{lemma:DF}
For $r\in\Npos$, the polynomials of Theorem~$\ref{thm:polynomials}$ can
be expressed in terms of Dumont-Foata polynomials, as follows:
\begin{align}
P_r(n) &= (-)^{r-1} n F_r(-n,1,1),	\label{eq:P_F}\\
\Pbar_r(n) &= (-4)^r F_{r+1}\left(\half-n,\half,\half\right),
					\label{eq:Pbar_F}\\
Q_r(n) &= (-2)^{r-1} n F_r\left(-n,\half,1\right),
					\label{eq:Q_F}\\
\Qbar_r(n) &=
(-)^{r-1}2^{2r-1} \left(n+\half\right)F_r\left(-n-\half,\half,1\right) =
2^r Q_r\left(n+\half\right). 		\label{eq:Qbar_and_Q}
\end{align}
\end{lemma}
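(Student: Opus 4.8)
The plan is to prove all four identities by induction on~$r$, in each case verifying that the claimed Dumont--Foata expression satisfies the same recurrence from Theorem~\ref{thm:polynomials} and the same initial value as the polynomial it is supposed to equal. The recurrences \eqref{eq:P_rec}--\eqref{eq:Qbar_rec} all shift the argument~$n$ of the polynomial, that is, the \emph{first} slot $-n$ (or $\half-n$, $-n-\half$) of the relevant $F_r$, whereas the defining recurrence \eqref{eq:DF-recurrence} increments the \emph{third} slot~$z$. The key preliminary step is therefore to recast \eqref{eq:DF-recurrence} as a recurrence incrementing the first slot. Since Dumont and Foata proved that $F_r$ is symmetric, replacing $(x,y,z)$ by $(z,y,x)$ in \eqref{eq:DF-recurrence} and applying symmetry on both sides gives
\begin{equation*}
F_{r+1}(x,y,z) = (x+z)(x+y)F_r(x+1,y,z) - x^2 F_r(x,y,z)
\end{equation*}
for $r\ge 1$, which is the identity I substitute into throughout.

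For $P_r$ and $Q_r$ the two cases run in parallel. Writing $G_r(n)$ for the right-hand side of \eqref{eq:P_F}, I put $x=-n$, $y=z=1$ into the first-slot recurrence; solving for $F_r$ in terms of $G_r$, the prefactors collapse because $(1-n)^2/(n-1)=n-1$ and $n^2/n=n$, and the surviving factor $(-)^{r-1}$ combines with the $(-)^{r}$ in $G_{r+1}$ to reproduce exactly the coefficients $n^2$ and $-n(n-1)$ of \eqref{eq:P_rec}. The same substitution with $y=\half$, $z=1$ handles \eqref{eq:Q_F}: here $x+y=-\frac{2n-1}{2}$, and tracking the powers of~$-2$ turns the identity into \eqref{eq:Q_rec}. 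In both cases the base case is $r=1$, where \eqref{eq:P_rec} and \eqref{eq:Q_rec} give $P_1(n)=Q_1(n)=n$, matching $nF_1(\cdots)=n$ since $F_1\equiv 1$. The polynomial $\Pbar_r$ is treated identically, the only twist being that \eqref{eq:Pbar_F} uses $F_{r+1}$ rather than $F_r$, so the induction step relates $F_{r+2}$ to $F_{r+1}$ via the first-slot recurrence with $x=\half-n$, $y=z=\half$. Then $x+z=x+y=1-n$ and $x^2=(2n-1)^2/4$, and after multiplying by $(-4)^{r+1}$ the identity becomes precisely \eqref{eq:Pbar_rec}; the base case is $r=0$, where the formula reads $(-4)^0F_1(\cdots)=1=\Pbar_0(n)$.

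For $\Qbar_r$ the work is almost entirely algebraic. The second equality in \eqref{eq:Qbar_and_Q} is not an independent fact: substituting $n\mapsto n+\half$ into \eqref{eq:Q_F} and multiplying by $2^r$ yields $2^rQ_r(n+\half)=(-)^{r-1}2^{2r-1}(n+\half)F_r(-n-\half,\half,1)$, so the two displayed forms coincide once \eqref{eq:Q_F} is established. It therefore suffices to prove $\Qbar_r(n)=2^rQ_r(n+\half)$. Setting $K_r(n):=2^rQ_r(n+\half)$ and substituting $m=n+\half$ into the (already proved) recurrence \eqref{eq:Q_rec}, one computes $2m^2=(2n+1)^2/2$ and $m(2m-1)=n(2n+1)$; multiplying through by $2^{r+1}$ converts this into $K_{r+1}(n)=(2n+1)^2K_r(n)-2n(2n+1)K_r(n-1)$, which is exactly \eqref{eq:Qbar_rec}, with $K_0\equiv 1=\Qbar_0$.

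I expect the only genuine obstacle to be the preliminary symmetry manoeuvre that produces the first-slot recurrence; once that is in hand, every case reduces to a short, essentially mechanical check that the half-integer arguments and the prefactors $(-)^{r-1}$, $(-2)^{r-1}$, $(-4)^r$, $2^r$ cancel to leave precisely the integer-coefficient recurrences of Theorem~\ref{thm:polynomials}. The points needing care are keeping track of these prefactors and of which slot plays the role of $y$ versus $z$ when symmetry is invoked.
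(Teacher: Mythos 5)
Your proposal is correct and takes essentially the same route as the paper's proof: the paper likewise shows that each Dumont--Foata expression satisfies the corresponding recurrence \eqref{eq:P_rec}--\eqref{eq:Qbar_rec} with the matching initial value and concludes by uniqueness, treating one case ($P_r$) and declaring the others similar, with the verification implicitly relying on the symmetry of $F_r$ that your first-slot recurrence makes explicit. Your reduction of $\Qbar_r$ to $Q_r$ via $n\mapsto n+\half$ in \eqref{eq:Q_rec} is exactly the relation $\Qbar_r(n)=2^rQ_r\left(n+\half\right)$ built into the lemma's statement, so it fits the paper's argument as well.
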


\begin{proof}
Since $F_r(x,y,z)$ is undefined for $r \le 0$, we assume that $r\in\Npos$.
From the defining recurrence~\eqref{eq:DF-recurrence} it is easy to
verify that the function $f_r(n) := (-)^{r-1}nF_r(-n,1,1)$ satisfies
the recurrence~\eqref{eq:P_rec} that is satisfied by $P_r(n)$.  Also,
$f_1(n) = nF_1(-n,1,1) = n$, so $f_1(n) = P_1(n)$.  The
recurrence~\eqref{eq:P_rec} uniquely defines the polynomial $P_r(n)$,
so $P_r(n) = f_r(n)$ and~\eqref{eq:P_F} holds.
The other three cases
are similar.
\end{proof}

Using Lemma~\ref{lemma:DF},
we obtain $U_r(n)$ in terms of Dumont-Foata polynomials.
There are three cases, depending on the parities of $r$ and $n$: (even,
any), (odd, even) and (odd, odd).

\begin{corollary}	\label{cor:DF}
\begin{align}
\label{eq:U_even_a}
U_{2r}(n) &= 2^{n-2}\, n\, (-)^{r-1} F_r\left(-\nhalf,\half,1\right),\\
\label{eq:U_odd_even}
U_{2r+1}(2n) &= n^2\, (-)^{r-1} F_r(-n,1,1) \binom{2n}{n},\\
\label{eq:U_odd_odd}
U_{2r+1}(2n-1) &= \half n(-)^r F_{r+1}\left(\half-n,\half,\half\right)
		\binom{2n}{n}.
\end{align}
The expressions
\eqref{eq:U_even_a}--\eqref{eq:U_odd_even} are valid for $r \ge 1$,
and \eqref{eq:U_odd_odd} is valid for $r \ge 0$.
\end{corollary}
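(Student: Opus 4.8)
The plan is to obtain each of \eqref{eq:U_even_a}--\eqref{eq:U_odd_odd} by feeding the Dumont-Foata expressions of Lemma~\ref{lemma:DF} into the product formulas of Theorem~\ref{thm:polynomials} and simplifying the resulting signs and powers of two; no new machinery is needed, so this is essentially a bookkeeping corollary. Throughout I would track the admissible range of $r$, keeping in mind that $F_r$ is undefined for $r \le 0$: the substitutions via \eqref{eq:P_F}, \eqref{eq:Q_F} and \eqref{eq:Qbar_and_Q} each require $r \ge 1$, while \eqref{eq:Pbar_F} (which produces $F_{r+1}$) remains usable down to $r = 0$.

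I would dispose of the two odd-$r$ formulas first, as they are immediate. For \eqref{eq:U_odd_even} I start from $U_{2r+1}(2n) = nP_r(n)\binom{2n}{n}$ in \eqref{eq:U_odd_even_P} and insert $P_r(n) = (-)^{r-1}nF_r(-n,1,1)$ from \eqref{eq:P_F}; the two factors of $n$ combine to give $n^2$, yielding \eqref{eq:U_odd_even} at once (valid for $r \ge 1$). For \eqref{eq:U_odd_odd} I combine $U_{2r+1}(2n-1) = 2^{-(2r+1)}n\Pbar_r(n)\binom{2n}{n}$ from \eqref{eq:U_odd_odd_Pbar} with $\Pbar_r(n) = (-4)^r F_{r+1}(\half-n,\half,\half)$ from \eqref{eq:Pbar_F}; the only arithmetic is the collapse $2^{-(2r+1)}(-4)^r = (-)^r 2^{-2r-1}2^{2r} = (-)^r/2$, which delivers the prefactor $\half n (-)^r$. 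Since \eqref{eq:Pbar_F} still holds at $r = 0$ (both sides equal $1$ there, as $F_1 \equiv 1$ and $\Pbar_0 \equiv 1$), this formula extends to $r \ge 0$, matching the stated range.

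The only step that needs genuine care is the even-$r$ formula \eqref{eq:U_even_a}, because it is asserted for \emph{every} parity of the argument, whereas Theorem~\ref{thm:polynomials} treats the two parities by two different polynomials: $U_{2r}(2m) = 2^{2m-r}Q_r(m)$ via \eqref{eq:U_even_even_Q}, and $U_{2r}(2m+1) = 2^{2m+1-2r}\Qbar_r(m)$ via \eqref{eq:U_even_odd_Qbar}. My aim would be to show that both subcases collapse to the single uniform right-hand side $2^{n-2}n(-)^{r-1}F_r(-\nhalf,\half,1)$, where $n$ denotes the argument of $U$. For even argument $n = 2m$ I substitute $Q_r(m) = (-2)^{r-1}mF_r(-m,\half,1)$ from \eqref{eq:Q_F}, observe $-m = -\nhalf$, and regroup $2^{2m-r}(-2)^{r-1}m = (-)^{r-1}2^{2m-1}m = (-)^{r-1}2^{n-2}n$. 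For odd argument $n = 2m+1$ I must first rewrite $\Qbar_r$ using the shift-and-scale identity $\Qbar_r(m) = 2^r Q_r(m+\half)$ from \eqref{eq:Qbar_and_Q}, so that $\Qbar_r(m) = 2^r(-2)^{r-1}(m+\half)F_r(-(m+\half),\half,1)$; since $m+\half = \nhalf$ the Dumont-Foata argument again equals $-\nhalf$, and collecting powers of two gives $2^{2m+1-2r}2^r(-2)^{r-1}(m+\half) = (-)^{r-1}2^{2m}(m+\half) = (-)^{r-1}2^{n-2}n$. The conceptual heart of this case is exactly the identity $\Qbar_r(n) = 2^r Q_r(n+\half)$ of \eqref{eq:Qbar_and_Q}, which is what forces the two parities to agree; consequently I expect the main (and only mild) obstacle to be the careful bookkeeping of the exponents of two in this merged computation, rather than anything requiring a new idea.
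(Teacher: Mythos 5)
Your proposal is correct and takes essentially the same route as the paper, whose proof is the one-line observation that the corollary is immediate from substituting \eqref{eq:P_F}--\eqref{eq:Qbar_and_Q} of Lemma~\ref{lemma:DF} into \eqref{eq:U_odd_even_P}--\eqref{eq:U_even_odd_Qbar} of Theorem~\ref{thm:polynomials}. Your detailed bookkeeping---including the merging of the two parities in \eqref{eq:U_even_a} via $\Qbar_r(n) = 2^r Q_r\left(n+\half\right)$ and the check that \eqref{eq:Pbar_F} extends to $r=0$---is exactly the implicit content of that terse proof, carried out correctly.
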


\begin{proof}
This is immediate from \eqref{eq:U_odd_even_P}--\eqref{eq:U_even_odd_Qbar}
of Theorem~\ref{thm:polynomials} and \eqref{eq:P_F}--\eqref{eq:Qbar_and_Q}
of Lemma~\ref{lemma:DF}.
\end{proof}

Proposition~\ref{prop:DF}, due to Carlitz~\cite{Carlitz}, gives
an explicit formula for the Dumont-Foata polynomials.
\begin{proposition}	\label{prop:DF}
For $r \in \Npos$ we have
\begin{equation}			\label{eq:Carlitz_thm1}
F_r(x,y,z) = 2(-)^{r-1}\sum_{0 \le j \le k < r}
	(-)^j\frac{(x+z)_k(y+z)_k(z+j)^{2r-1}}{j!(k-j)!(2z+j)_{k+1}}
	\,\raisecomma
\end{equation}
provided that the Pochhammer symbol
in the denominator does not vanish, which is equivalent
to saying that $(2z)_{2r-1} \ne 0$.
In particular, \eqref{eq:Carlitz_thm1} is valid
for all positive $z$.
\end{proposition}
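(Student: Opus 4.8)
The plan is to prove \eqref{eq:Carlitz_thm1} by induction on~$r$, exploiting the fact that the defining recurrence \eqref{eq:DF-recurrence} together with the initial value $F_1 = 1$ determines $F_r$ uniquely. Write $G_r(x,y,z)$ for the right-hand side of \eqref{eq:Carlitz_thm1}. First I would check the base case: when $r = 1$ the only admissible index pair is $j = k = 0$, and the single term collapses to $2z/(2z) = 1 = F_1(x,y,z)$. It then suffices to show that $G_r$ obeys the same recurrence as $F_r$, namely $G_{r+1}(x,y,z) = (x+z)(y+z)G_r(x,y,z+1) - z^2 G_r(x,y,z)$.

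For the inductive step I would substitute $z \mapsto z+1$ in $G_r$ and use $(x+z)(x+z+1)_k = (x+z)_{k+1}$ (and similarly in~$y$) to absorb the prefactor $(x+z)(y+z)$; reindexing $j \mapsto j-1$, $k \mapsto k-1$ then rewrites $(x+z)(y+z)G_r(x,y,z+1)$ as a sum over $1 \le j \le k \le r$ whose summand has the same shape as that of $G_{r+1}$ but carries the power $(z+j)^{2r-1}$ rather than $(z+j)^{2r+1}$. After pulling out the common factor $(x+z)_k(y+z)_k$ and comparing coefficients of this factor for each fixed~$k$, the required identity reduces to a purely $j$-indexed statement. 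The essential algebraic facts are $(z+j)^2 - z^2 = j(2z+j)$ together with $j/j! = 1/(j-1)!$ and $(2z+j)/(2z+j)_{k+1} = 1/(2z+j+1)_k$, which make the matching hold \emph{term by term} in both $j$ and $k$ throughout the interior range $1\le k\le r-1$, and trivially at $k = 0$.

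The main obstacle is the top boundary $k = r$: here the summand of $-z^2 G_r(x,y,z)$ is absent (that sum runs only to $k = r-1$), so the term-by-term cancellation leaves a residual and reduces the whole recurrence to the auxiliary identity
\[\sum_{j=0}^{r} (-)^j \binom{r}{j}\frac{(z+j)^{2r-1}}{(2z+j)_{r+1}} = 0.\]
I would prove this by the residue calculus for finite differences. Setting $f(t) = (z+t)^{2r-1}/(2z+t)_{r+1}$ and $g(t) = r!/\bigl(t(t-1)\cdots(t-r)\bigr)$, the residue of $f(t)g(t)$ at $t = j$ equals $(-)^{r-j}\binom{r}{j}f(j)$, so the sum above is $(-)^r\sum_{0 \le j \le r}\operatorname{Res}_{t=j}[fg]$. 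Because $f(t)g(t) = O(t^{-3})$ as $t \to \infty$, the sum of all residues of $fg$ over $\C$ vanishes, so it remains to evaluate the residues at the poles $t = -2z - m$ ($0 \le m \le r$) contributed by $(2z+t)_{r+1}$. A direct computation of these residues---using $(z+t)^{2r-1}\big|_{t=-2z-m} = -(z+m)^{2r-1}$ and collapsing the remaining products into $(2z+m)_{r+1}$ and $\binom{r}{m}$---shows that $\sum_m \operatorname{Res}_{t=-2z-m}[fg]$ equals the \emph{same} quantity $(-)^r S$, where $S$ denotes the sum in the auxiliary identity. Adding the two residue contributions gives $2(-)^r S = 0$, hence $S = 0$.

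Finally I would record that the hypothesis $(2z)_{2r-1} \ne 0$ (equivalently, that no denominator Pochhammer symbol vanishes) guarantees that the poles $-2z-m$ are disjoint from the integers $0,\dots,r$, so that $f$ is regular at the evaluation points and the residue bookkeeping is valid; since \eqref{eq:Carlitz_thm1} is then an identity of rational functions in~$z$, it extends by continuity to the full stated domain. The induction therefore closes, and $G_r = F_r$ for all $r \in \Npos$.
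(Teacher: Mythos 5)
Your proof is correct, but it takes a genuinely different route from the paper: the paper offers no argument of its own for Proposition~\ref{prop:DF}, simply citing Carlitz's Theorem~1 (modulo notation) with a footnote asserting the nonvanishing condition. You instead give a self-contained proof, verifying that the right-hand side $G_r$ of \eqref{eq:Carlitz_thm1} satisfies the defining recurrence \eqref{eq:DF-recurrence} with $G_1=1$, which determines $F_r$ uniquely. Your reduction of the inductive step is accurate: after the shift $z\mapsto z+1$, the absorption $(x+z)(x+z+1)_k=(x+z)_{k+1}$, the reindexing $j\mapsto j-1$, $k\mapsto k-1$, and the identities $(z+j)^2-z^2=j(2z+j)$ and $(2z+j+1)_k=(2z+j)_{k+1}/(2z+j)$, all terms with $k<r$ cancel against $G_{r+1}$ and $-z^2G_r$, leaving precisely the boundary identity $\sum_{j=0}^{r}(-)^j\binom{r}{j}(z+j)^{2r-1}/(2z+j)_{r+1}=0$; and your residue proof of that identity checks out --- the residue of $g$ at $t=j$ is $(-)^{r-j}\binom{r}{j}$, the residues at $t=-2z-m$ sum (after the sign from the odd power $2r-1$) to the same quantity $(-)^r S$ as those at $t=0,\dots,r$, and $f(t)g(t)=O(t^{-3})$ forces the total to vanish, giving $2(-)^rS=0$. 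What your approach buys is a first-principles argument that makes transparent exactly where the hypothesis on $z$ enters (disjointness of the two pole families); what the citation buys the paper is brevity and consistency with the other Carlitz results \eqref{eq:Carlitz42} and \eqref{eq:Carlitz222} it imports from the same source. One bookkeeping point worth tightening: at the step producing $F_{r+1}$, disjointness of $\{-2z-m: 0\le m\le r\}$ from $\{0,\dots,r\}$ requires $2z\notin\{0,-1,\dots,-2r\}$, i.e.\ $(2z)_{2r+1}\ne 0$, which is exactly the stated hypothesis at level $r+1$ (and implies the inductive hypotheses at both $z$ and $z+1$, since $(2z)_{2r-1}$ and $(2z+2)_{2r-1}$ divide $(2z)_{2r+1}$); it is not the level-$r$ condition $(2z)_{2r-1}\ne 0$ that you quote at that point, though your closing rational-function continuity remark covers the discrepancy in any case.
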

\begin{proof}
Modulo a small change of notation, this is
Carlitz~\cite[Thm.~1]{Carlitz}\footnote{Carlitz does not
state the condition $(2z)_{2r-1} \ne 0$. In fact, if we evaluate
the RHS of~\eqref{eq:Carlitz_thm1} symbolically and cancel any factors
of the form $2z+j$ that occur in both the numerator and the denominator,
then the result is always valid.}.

\end{proof}

Theorem~\ref{thm:explicit} gives explicit formulas for $U_r(n)$. 
The definition~\eqref{eq:U_def} can be used to evaluate
$U_r(n)$ for small~$n$, but this is infeasible if $n$ is large, as there
are $n+1$ terms in the defining sum. Hence it is preferable to
use the appropriate explicit formula of Theorem~\ref{thm:explicit}
when $r$ is small but $n$ is large\footnote{The
recurrence relations~\eqref{eq:Urec}
or~\eqref{eq:P_rec}--\eqref{eq:Qbar_rec}
may also be used in this case.}.

\pagebreak[3]

\begin{theorem}			\label{thm:explicit}
For $r, n \in \Npos$ we have the following explicit formulas:
\begin{equation}		\label{eq:U_even_a2}
U_{2r}(n) = 2^{n+1} \sum_{1 \le j \le k \le r}
	(-)^j \frac{\left(-\frac{n}{2}\right)_k 
	    \left(\half\right)_k}
		    {(k-j)!(k+j)!}\,j^{2r},
\end{equation}
\begin{equation}	\label{eq:U_odd_even2}
U_{2r+1}(2n) = 2n\binom{2n}{n}\sum_{1 \le j \le k \le r}
	(-)^{j}\frac{(-n)_{k}}{(k-j)!\,(k+1)_j}\,j^{2r},
\end{equation}
\begin{equation}	\label{eq:U_odd_odd2}
U_{2r-1}(2n-1) = \binom{2n}{n}\sum_{1 \le j \le k \le r}
	(-)^{j}\frac{(-n)_{k}}{(k-j)!\,(k)_j}\,
	  \left(j-\half\right)^{2r-1}.
\end{equation}
\end{theorem}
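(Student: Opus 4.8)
The plan is to combine Corollary~\ref{cor:DF}, which already expresses $U_r(n)$ in terms of Dumont--Foata polynomials, with the explicit Carlitz formula~\eqref{eq:Carlitz_thm1} of Proposition~\ref{prop:DF}. Each of the three displayed formulas corresponds to one of the three cases~\eqref{eq:U_even_a}--\eqref{eq:U_odd_odd}, so the whole theorem reduces to substituting the appropriate argument triple $(x,y,z)$ into~\eqref{eq:Carlitz_thm1} and simplifying. Concretely, for the even case~\eqref{eq:U_even_a2} I would use $(x,y,z)=(-\nhalf,\half,1)$; for~\eqref{eq:U_odd_even2} I would use $(x,y,z)=(-n,1,1)$ in~\eqref{eq:U_odd_even}; and for~\eqref{eq:U_odd_odd2} I would first rewrite~\eqref{eq:U_odd_odd} with $r$ replaced by $r-1$, so that it reads $U_{2r-1}(2n-1)=\half n(-)^{r-1}F_r(\half-n,\half,\half)\binom{2n}{n}$, and then use $(x,y,z)=(\half-n,\half,\half)$.

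In every case the Carlitz sum runs over $0\le j\le k<r$, whereas the target runs over $1\le j\le k\le r$, so the key manipulation is the reindexing $j\mapsto j+1$, $k\mapsto k+1$. After this shift one has $x+z=1-\nhalf$ (respectively $1-n$, $1-n$), and I would use the identity $(1-\nhalf)_{k-1}=(-\nhalf)_k/(-\nhalf)$ (respectively $(1-n)_{k-1}=(-n)_k/(-n)$) to pull the leading Pochhammer factor out of the numerator. The factor $-2/n$ (or $-1/n$) so produced cancels against the prefactor $\half n$, $n^2$, or $n$ coming from Corollary~\ref{cor:DF}, while the sign $(-)^{r-1}$ from the corollary cancels the sign $2(-)^{r-1}$ from~\eqref{eq:Carlitz_thm1} (leaving the constant $2$ that appears in~\eqref{eq:U_even_a2} and~\eqref{eq:U_odd_even2}). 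Finally I would rewrite the denominator Pochhammer $(2z+j)_{k+1}$ as a ratio of factorials: after the shift it becomes $(j+1)_k$, $(j+1)_k$, and $(j)_k$ in the three cases, and the factorial algebra $\frac{1}{(j-1)!\,(j+k)!/j!}=\frac{j}{(j+k)!}$ (together with $\frac{k!}{(j+k)!}=\frac{1}{(k+1)_j}$ and $\frac{(k-1)!}{(j+k-1)!}=\frac{1}{(k)_j}$) delivers exactly the denominators $(k+j)!$, $(k-j)!\,(k+1)_j$, and $(k-j)!\,(k)_j$ demanded by~\eqref{eq:U_even_a2}--\eqref{eq:U_odd_odd2}.

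I expect the main obstacle to be precisely this Pochhammer/factorial bookkeeping, and in particular one structural subtlety worth flagging: the shift sends $(z+j)^{2r-1}$ to $j^{2r-1}$ in the first two cases and to $(j-\half)^{2r-1}$ in the third (since there $z=\half$). In the first two cases the spare factor of $j$ extracted from $\frac{j}{(j+k)!}$ raises this to $j^{2r}$, matching~\eqref{eq:U_even_a2} and~\eqref{eq:U_odd_even2}, whereas in the odd--odd case the corresponding factor is absorbed into $(k)_j$ instead, so the exponent remains $2r-1$ as in~\eqref{eq:U_odd_odd2}. Keeping careful track of which factor of the new index $j$ is consumed by the numerator versus the denominator is the delicate point; once the three parameter substitutions and the single reindexing are fixed, the remainder is a routine verification that the constants, signs, and Pochhammer products agree term by term.
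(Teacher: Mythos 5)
Your proposal is correct and follows essentially the same route as the paper: substituting Carlitz's explicit formula~\eqref{eq:Carlitz_thm1} into the three cases of Corollary~\ref{cor:DF}, with $r$ replaced by $r-1$ in~\eqref{eq:U_odd_odd} for the odd--odd case. The paper's proof states only this substitution and leaves the details implicit; your index shift $j\mapsto j+1$, $k\mapsto k+1$ and the Pochhammer/factorial bookkeeping (including the structural point about where the spare factor of $j$ goes in each case) are exactly the verification the paper omits, and they check out.
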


\begin{proof}

We can substitute
the identity~\eqref{eq:Carlitz_thm1}
into each of~\eqref{eq:U_even_a}--\eqref{eq:U_odd_odd} to obtain
explicit formulas for the different cases of $U_r(n)$.
For example, \eqref{eq:U_even_a} gives~\eqref{eq:U_even_a2}.
Similarly, \eqref{eq:U_odd_even} gives~\eqref{eq:U_odd_even2},
and \eqref{eq:U_odd_odd} gives
\eqref{eq:U_odd_odd2} on replacing $r$ by $r-1$.
\end{proof}

\vspace*{\fill}\pagebreak[3]

\begin{remark}
{\rm
Replacing $F_r(-\nhalf,\half,1)$ by $F_r(-\nhalf,1,\half)$
before applying \eqref{eq:Carlitz_thm1} 
gives
\begin{equation}		\label{eq:U_even_b2}
U_{2r}(n) = 2^n n \sum_{1 \le j \le k \le r}
	(-)^{j-1} \frac{\left(\frac{1-n}{2}\right)_{k-1} 
           \left(\half\right)_k}
 	     {(k+j-1)!(k-j)!}\,\left(j-\half\right)^{2r-1}.
\end{equation}
We note that formulas~\eqref{eq:U_even_a2} and~\eqref{eq:U_even_b2}
are different.
}
\end{remark}

\begin{remark}
{\rm
Other explicit expressions for $U_r(n)$ may be obtained by applying the
above process after interchanging
$x$ and $z$ in~\eqref{eq:Carlitz_thm1}, but these expressions
require the condition $n \ge r$ to avoid division by zero due to 
the vanishing of a Pochhammer symbol in the denominator.
Hence, we omit the details. 
}
\end{remark}

\section{Explicit formulas via Lagrange interpolation} \label{sec:explicit}

We can obtain explicit formulas via polynomial
interpolation whenever the function being interpolated is a polynomial
of known degree.  For example,
from~\eqref{eq:U_even_even_Q}--\eqref{eq:U_even_odd_Qbar} and
\eqref{eq:Qbar_and_Q} we have
$U_{2r}(n) = 2^{n-r}Q_r(n/2)$, where $Q_r(x)$ is a polynomial of degree~$r$.
Thus, we can obtain an explicit formula for $Q_r(x)$, and hence for
$U_{2r}(n)$, by evaluating $Q_r(x)$ at $r+1$ distinct points $x_k$ and using
Lagrange's polynomial interpolation formula
\[Q_r(x) = \sum_{k=0}^r Q_r(x_k)L_k(x),\]
where
\[L_k(x) = \prod_{\substack{0\le j \le r \\ j\ne k}} 
		\frac{x-x_j}{x_k-x_j}\,\raisedot\]
For example, taking $x_k = k$ for $0 \le k \le r$ gives the explicit
formula
\begin{equation}	\label{eq:U_even_any_Lagrange}
U_{2r}(n) = (-)^r\, 2^{n+1}\! \!\sum_{1 \le j \le k \le r}\!
	\frac{\left(-\nhalf\right)_k \left(\half\right)_k 
		\left(\nhalf-r\right)_{r-k}}
		{(k+j)! (k-j)! (r-k)!}\,j^{2r},
\end{equation}
which is valid for $n\in\Npos$,
and may be compared with~\eqref{eq:U_even_a2}
and~\eqref{eq:U_even_b2}.

In the same way, by interpolating $P_r(n)$ at $n=0,1,\dots,r$,
using $P_r(0) = 0$ for $r \ge 1$,
we obtain the explicit formula
\begin{equation}	\label{eq:U_odd_even_Lagrange}
U_{2r+1}(2n) = 2n\binom{2n}{n}(-)^r \!\sum_{1 \le j \le k \le r}\!
	\frac{(-n)_k(n-r)_{r-k}}{(r-k)!(k-j)!(k)_{j+1}}
	\,j^{2r+1},
\end{equation}
which is valid for $r\in\Npos$ and $n\in\N$,
and may be compared with~\eqref{eq:U_odd_even2}.

Similarly, by interpolating $n\Pbar_{r-1}(n)$ at $n = 0, 1, \ldots, r$,
we obtain
\begin{equation}	\label{eq:U_odd_odd_Lagrange}
U_{2r-1}(2n-1) = (-)^r\binom{2n}{n}\!\sum_{0 \le j \le k \le r}\!
	\frac{(1-n)_k (n-1-r)_{r-k}}{(r-k)!(k-j)!(k+2)_j}
	\,\left(j+\half\right)^{2r-1}
\end{equation}
for $r \ge 1$ and $n \ge 1$.  
This may be compared with~\eqref{eq:U_odd_odd2}.

We can obtain an infinite number of explicit formulas by evaluating
the relevant polynomials at different sets of $r+1$ distinct points.
The examples given above seem the most natural.

\begin{remark}		\label{remark:other_explicit}
{\rm
Recently, Guo and Zeng~\cite{GZ} considered $S_r(n)$.
They obtained explicit formulas which may be written,
for $r \ge 1$, as
\begin{equation}		\label{eq:GZ7.1}
S_{2r}(n) = \sum_{0 \le j \le k < r}
   2^{2n-2k-1}(-)^{k-j}\binom{2n}{j}
     \frac{(2n-2k)_{k-j}}{(k-j)!}\,
       (n-j)^{2r-1}
\end{equation}
and 
\begin{equation}		\label{eq:GZ3.2}
S_{2r-1}(n) = \!\!\!\!\sum_{0 \le j \le k \le \min(r-1,n)}\!\!\!\!\!\!
  (-)^{k-j}
    \binom{2n-2k}{n-k}\binom{2n}{j}
       \frac{(2n-2k)_{k-j}}{(k-j)!}\,
          (n-j)^{2r-1}.
\end{equation}

Assuming that $n \ge r$,
the explicit formulas~\eqref{eq:GZ7.1}--\eqref{eq:GZ3.2}
may be obtained by interpolation of
the polynomials $Q_r(n)/n$ and $P_{r-1}(n)$ respectively,
using the $r$ points
$n, n-1, \ldots, n-r+1$. 
Similar remarks apply to the results of
Chen and Chu~\cite[Thm.~1~--~Cor.~4]{CC}.
}
\end{remark}

\section{Generating functions}		\label{sec:egfs}

Theorem~\ref{thm:Ugen_even_any} gives an egf~\eqref{eq:U_even_gf}
for $U_{2r}(n)$ and any fixed $n\in\N$.  It generalises the egf
\begin{equation}			\label{eq:S_even_gf}
\sum_{r\ge 0} S_{2r}(n)\frac{z^{2r}}{(2r)!}
 = 2^{2n} \cosh^{2n} (z/2)
\end{equation}
given by Tuenter~\cite[\S5]{Tuenter1},
since replacing $n$ by $2n$ in~\eqref{eq:U_even_gf}
gives~\eqref{eq:S_even_gf}. The proof is straightforward,
and does not require the results of Carlitz.
\begin{theorem}		\label{thm:Ugen_even_any}
For $n\in\N$ we have the exponential generating function
\begin{equation}			\label{eq:U_even_gf}
\sum_{r\ge 0} U_{2r}(n)\frac{z^{2r}}{(2r)!}
 = 2^n \cosh^n (z/2).
\end{equation}
\end{theorem}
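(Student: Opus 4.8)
The plan is to compute the exponential generating function directly from the definition~\eqref{eq:U_def}, without invoking any of the Dumont-Foata machinery. First I would interchange the order of summation. Writing
\[
\sum_{r\ge 0} U_{2r}(n)\frac{z^{2r}}{(2r)!}
 = \sum_{r\ge 0}\frac{z^{2r}}{(2r)!}\sum_k \binom{n}{k}\left|\frac{n}{2}-k\right|^{2r},
\]
I would swap the two sums (justified since everything is absolutely convergent for fixed $z$, each inner sum being finite) to obtain
\[
\sum_k \binom{n}{k}\sum_{r\ge 0}\frac{z^{2r}}{(2r)!}\left|\frac{n}{2}-k\right|^{2r}.
\]
The inner sum over~$r$ is now the Taylor series of $\cosh$, since $|n/2-k|^{2r} = (n/2-k)^{2r}$ makes the absolute value disappear for even exponents. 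Hence the inner sum equals $\cosh\!\big((n/2-k)z\big) = \cosh\!\big((n-2k)z/2\big)$.

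The problem thus reduces to evaluating $\sum_k \binom{n}{k}\cosh\!\big((n-2k)z/2\big)$ in closed form. The key step is to recognise this as a binomial-type expansion. Writing $\cosh$ in terms of exponentials gives
\[
\sum_k \binom{n}{k}\cosh\!\left(\frac{(n-2k)z}{2}\right)
 = \frac12\sum_k \binom{n}{k}\left(e^{(n-2k)z/2} + e^{-(n-2k)z/2}\right).
\]
I would then factor $e^{(n-2k)z/2} = e^{nz/2}\big(e^{-z}\big)^k\cdot 1^{n-k}$, or more symmetrically set $u = e^{z/2}$ so that $e^{(n-2k)z/2} = u^{n-2k}$, and recognise each of the two sums as a binomial theorem expansion: $\sum_k \binom{n}{k} u^{n-k}u^{-k} \cdot(\text{sign adjustments})$. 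Concretely, $\sum_k \binom{n}{k} u^{n-2k} = u^{-n}\sum_k\binom{n}{k}u^{2(n-k)}\cdot u^{-\,\cdots}$; the clean way is to observe that both exponential sums collapse, via $(a+b)^n = \sum_k\binom{n}{k}a^k b^{n-k}$ with $a = e^{-z/2}$, $b = e^{z/2}$, to $\big(e^{z/2}+e^{-z/2}\big)^n = \big(2\cosh(z/2)\big)^n = 2^n\cosh^n(z/2)$. The two halves are equal by the symmetry $k\mapsto n-k$, so the factor of $\frac12$ is absorbed and the result is exactly $2^n\cosh^n(z/2)$.

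I expect the only genuine care required is at the step where the absolute value is removed: this works precisely because the exponents~$2r$ are even, which is why the theorem is stated for $U_{2r}$ rather than for general $U_r$. The interchange of summation is routine given the finiteness of the inner sum over~$k$, and the final collapse is a single application of the binomial theorem after rewriting $\cosh$ via exponentials. No real obstacle arises; as the paper remarks, the proof does not need Carlitz's results. As a consistency check, setting $n\mapsto 2n$ recovers Tuenter's egf~\eqref{eq:S_even_gf}, and the constant term $r=0$ correctly reproduces $U_0(n)=2^n$.
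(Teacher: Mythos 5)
Your proposal is correct and follows essentially the same route as the paper's proof: swap the two (absolutely convergent) sums, recognise the inner sum as $\cosh((n/2-k)z)$, expand $\cosh$ into exponentials, use the symmetry $k\mapsto n-k$ to absorb the factor $\frac12$, and apply the binomial theorem to get $\left(e^{z/2}+e^{-z/2}\right)^n = 2^n\cosh^n(z/2)$. The only cosmetic difference is that the paper first collapses to the single sum $\sum_k\binom{n}{k}e^{(k-n/2)z} = e^{-nz/2}(1+e^z)^n$ before factoring, whereas you apply the binomial theorem symmetrically.
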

\begin{proof}
From the definition~\eqref{eq:U_def} with $r$ replaced by $2r$ (so the
absolute value signs can be omitted), we have
\begin{align*}
\sum_{r\ge 0}U_{2r}(n)\frac{z^{2r}}{(2r)!} 
 &= \sum_{r\ge 0}\sum_k\binom{n}{k}\left(\frac{n}{2}-k\right)^{2r}
	\frac{z^{2r}}{(2r)!}\\
 &= \sum_k\binom{n}{k}\sum_{r\ge 0}\left(\frac{n}{2}-k\right)^{2r}
	\frac{z^{2r}}{(2r)!}\\
 &= \sum_k\binom{n}{k}\cosh\left(\left(\frac{n}{2}-k\right)z\right)\\
 &= \frac{1}{2}\sum_k\binom{n}{k}
	\left(e^{(n/2-k)z} + e^{(k-n/2)z}\right)\\
 &= \sum_k\binom{n}{k}e^{(k-n/2)z}\\
 &= e^{-nz/2}\left(1+e^z\right)^n\\
 &= \left(e^{-z/2}+e^{z/2}\right)^n = 2^n\cosh^n(z/2),
\end{align*}
as required.
The series
converges absolutely for all~$z$.
\end{proof}

We can obtain other egfs from the results of
Carlitz.
First, we note that Carlitz~\cite[eqn.~(4.2)]{Carlitz} gives the egf
\begin{equation}		\label{eq:Carlitz42}
\sum_{r \ge 1} (-)^rF_r(x,y,1) \frac{z^{2r}}{(2r)!}
 = \frac{1}{xy}\sum_{k\ge 1}(-)^k\frac{(x)_k(y)_k}{(2k)!}
	\left(2\sinh\frac{z}{2}\right)^{2k}.
\end{equation}
In view of~\eqref{eq:U_even_a} and~\eqref{eq:U_odd_even}, this 
allows us to obtain egfs for $U_{2r}(n)$
and $U_{2r+1}(2n)$.
{From}~\eqref{eq:U_even_a} and~\eqref{eq:Carlitz42} we obtain
\begin{equation}		\label{eq:U_even_both_egf}
\sum_{r\ge 0}U_{2r}(n)\frac{z^{2r}}{(2r)!} =
	2^n\,\sum_{k \ge 0}(-)^k
	\left(\half\right)_k\left(-\nhalf\right)_k
	\frac{\left(2\sinh(z/2)\right)^{2k}}{(2k)!}\,\raisedot
\end{equation}
Comparing this with~\eqref{eq:U_even_gf}, we must have
\begin{equation}		\label{eq:sinh_cosh}
\cosh^n(z) = \sum_{k \ge 0}(-)^k
        \left(\half\right)_k\left(-\nhalf\right)_k
        \frac{\left(2\sinh(z)\right)^{2k}}{(2k)!}\,\raisedot
\end{equation}

\pagebreak[3]
\noindent
Indeed, if we define $s := \sinh^2 z$, so $1+s = \cosh^2 z$, then
the left side of~\eqref{eq:sinh_cosh} is $(1+s)^{n/2}$,
and the right side is a disguised form of the binomial expansion
$(1+s)^{n/2} = 1 + (n/2)s + (n/2)(n/2-1)s^2/2! + \cdots$.
Thus,~\eqref{eq:U_even_a} and~\eqref{eq:Carlitz42}
give nothing new; they merely confirm~\eqref{eq:U_even_gf}.

To obtain something new, we
consider~\eqref{eq:U_odd_even} and~\eqref{eq:Carlitz42}.
Proceeding as above, we obtain the interesting egf:
\begin{equation}	\label{eq:U_odd_even_egf}
\sum_{r\ge 0}U_{2r+1}(2n)\frac{z^{2r}}{(2r)!} =
 n\binom{2n}{n}\sum_{k=0}^n 2^{2k}\binom{n}{k}\binom{2k}{k}^{-1}
  \sinh^{2k}\left(\frac{z}{2}\right)\,\raisedot
\end{equation}
Observe that, in order to calculate $U_{2r+1}(2n)$
from~\eqref{eq:U_odd_even_egf}, it is only necessary to sum
the terms on the right-hand side 
for $k \le \min(r,n)$.

The final case $U_{2r+1}(2n-1)$ is more difficult because
\eqref{eq:Carlitz42} does not apply to~\eqref{eq:U_odd_odd},
as the last argument of $F_{r+1}(x,y,z)$ in~\eqref{eq:U_odd_odd}
is $\half$, not $1$.
However, we can use the egf
\begin{align}
\sum_{r\ge 0}&(-)^r F_{r+1}(x,y,z)\frac{u^{2r+1}}{(2r+1)!} \nonumber\\
 &=\, 2\,\sum_{k=0}^\infty \sum_{j=0}^k
     \frac{(-)^j (x+z)_k (y+z)_k (2z)_j}{j! (k-j)! (2z)_{k+1} (2z+k+1)_j}
        \sinh((z+j)u),	\label{eq:Carlitz222}
\end{align}
which follows\footnote{Carlitz does not give~\eqref{eq:Carlitz222}
explicitly.  He gives a generating function
\cite[eqn.~(4.6)]{Carlitz} involving the hypergeometric function.
However, there is a problem with convergence of the Maclaurin series
involved, because the hypergeometric function occurs with
arguments $e^u$ and $e^{-u}$, one of which lies outside the unit circle.
Thus, Carlitz's generating function is only valid (if at all)
in the context of formal power series.
We prefer to use~\eqref{eq:Carlitz222}, for which there is no problem
with convergence.
}
from the discussion in Carlitz~\cite[pp.~221--222]{Carlitz}.
Using~\eqref{eq:U_odd_odd}
and~\eqref{eq:Carlitz222}, after some simplification followed by
a change of variables ($u \mapsto z$),
we obtain the following egf:

\begin{align}
\sum_{r\ge 0}&\,U_{2r+1}(2n-1)\frac{z^{2r+1}}{(2r+1)!} \nonumber \\
&=\,
n\binom{2n}{n}\sum_{0 \le j \le k < n}
	\frac{(-)^{k-j}\binom{n-1}{k}\binom{2k}{k-j}}{\binom{2k}{k}}\,
	\frac{\sinh\left(\left(j+\half\right)z\right)}
		{j+k+1}\,\raisecomma
	\label{eq:U_odd_odd_egf}
\end{align}
which is valid for $n\in\Npos$.

\section{Asymptotics}		\label{sec:asymptotics}

Using Stirling's formula to approximate the binomial coefficient
in~\eqref{eq:U_def}, we see that $U_r(n)$ can be regarded as a Riemann
sum approximating a suitably scaled integral of the form
\[\int_{-\infty}^{\infty} e^{-x^2}|x|^r\,{\rm d}x.\]
This gives an asymptotic approximation to $U_r(n)$ as $n \to \infty$
with $r$ fixed.  More precisely,
if the notation $O_r(\cdots)$ means that the implied
constant \hbox{depends} on~$r$, we have
\begin{equation}		\label{eq:asymptotics}
U_r(n) = \pi^{-1/2}\,2^n \left(\frac{n}{2}\right)^{r/2}
		\Gamma\left(\frac{r+1}{2}\right)
		\left(1 + O_r\left(\frac{1}{n}\right)\right)
\end{equation}
as $n \to \infty$.  
The same asymptotic approximation
can be obtained by ignoring all but the leading
coefficient in the polynomials $P_r(n), \cdots, \Qbar_r(n)$.
(The leading coefficients are given in Table~$1$ of Appendix~$2$.)
For example, \[P_r(n) = r!n^r + O_r(n^{r-1}),\] so~\eqref{eq:U_odd_even_P}
and the approximation 
\[\binom{2n}{n} = \frac{2^{2n}}{\sqrt{\pi n}}
		\left(1 + O\left(\frac{1}{n}\right)\right)\]
give
\[U_{2r+1}(2n) = \pi^{-1/2}2^{2n}n^{r+1/2}\,r!
		\left(1 + O_r\left(\frac{1}{n}\right)\right),\]
which is a special case of~\eqref{eq:asymptotics}. 
Other special cases of~\eqref{eq:asymptotics} can be obtained from
\eqref{eq:U_odd_odd_Pbar}--\eqref{eq:U_even_odd_Qbar}.

\vspace*{\fill}\pagebreak[4]

\pagebreak[4]

\subsection*{Appendix 1: Small cases of the polynomials $P, Q, \Pbar, \Qbar$}

\begin{align*}
P_0(n) &= 1,\\
P_1(n) &= n,\\
P_2(n) &= n(2n-1),\\
P_3(n) &= n(6n^2 - 8n + 3),\\
P_4(n) &= n(24n^3 - 60n^2 + 54n - 17),\\
P_5(n) &= n(120n^4 - 480n^3 + 762n^2 - 556n + 155),\\
Q_0(n) &= 1,\\
Q_1(n) &= n,\\
Q_2(n) &= n(3n-1),\\
Q_3(n) &= n(15n^2 - 15n + 4),\\
Q_4(n) &= n(105n^3 - 210n^2 + 147n - 34),\\
Q_5(n) &= n(945n^4 - 3150n^3 + 4095n^2 - 2370n + 496),\\
\Pbar_0(n) &= 1,\\
\Pbar_1(n) &= 4n-3,\\
\Pbar_2(n) &= 32n^2 - 56n + 25,\\
\Pbar_3(n) &= 384n^3 - 1184n^2 + 1228n - 427,\\
\Pbar_4(n) &= 6144n^4 - 29184n^3 + 52416n^2 - 41840n + 12465,\\
\Pbar_5(n) &= 122880n^5 - 829440n^4 +\\
	   & \phantom{= xx} 2258688n^3 - 3076288n^2
		+ 2079892n - 555731,\\
\Qbar_0(n) &= 1,\\
\Qbar_1(n) &= 2n+1,\\
\Qbar_2(n) &= 12n^2 + 8n + 1,\\
\Qbar_3(n) &= 120n^3 + 60n^2 + 2n + 1,\\
\Qbar_4(n) &= 1680n^4 - 168n^2 + 128n + 1,\\ 
\Qbar_5(n) &= 30240n^5 - 25200n^4 + 5040n^3 + 7320n^2 - 2638n + 1.
\end{align*}

The triangles of coefficients of $-P_r(-n)/n, -Q_r(-n)/n$ and
$\Qbar_r(-n)$ for $r \ge 1$ are OEIS~\cite{OEIS} 
sequences A036970, A083061 and A160485 respectively. 
We have contributed the coefficients of $\Pbar_r(n)$
as sequence A245244.
The values $(-)^r\Pbar_r(0)$ are
sequence A009843 (see Appendix~$2$ for details).

The bijection~\eqref{eq:Qbar_and_Q} between A083061 and A160485
(by a shift of $\pm\half$ and scaling by a power of~$2$)
was not mentioned in the relevant OEIS entries as at July 14, 2014;
we have now contributed comments to this effect.

\subsection*{Appendix 2: Special values of the polynomials  $P, Q, \Pbar, \Qbar$}

Let ${\cal P}_r$ denote any of the polynomials $P_r, Q_r, \Pbar_r, \Qbar_r$.
In Table~1 we give 
the special values ${\cal P}_r(0)$, ${\cal P}_r(1)$, and ${\cal P}_r(\infty)$,
where the latter denotes the leading coefficient of ${\cal P}_r$.

\begin{table}[h]
\begin{center}
\begin{tabular}{|c|c|c|c|c|}
\hline &&&&\\[-10pt]
${\cal P}_r$	& $P_r$	& $Q_r$	& $\Pbar_r$	& $\Qbar_r$\\[1pt]
\hline &&&&\\[-10pt]
${\cal P}_r(0)$	&
 $\delta_{0,r}$ &
  $\delta_{0,r}$ &
   $(-)^r(2r+1){\cal S}_r$ & $1$\\[1pt]
${\cal P}_r(1)$	&
 $1$	&
  $\max(1,2^{r-1})$ & $1$ &
   $(3^{2r} + 3)/4$\\[1pt]
${\cal P}_r(\infty)$	& 
 $r!$	&
  $(2r)!/(2^r r!)$ &
   $2^{2r}r!$ &
 $(2r)!/r!$\\[1pt]
\hline
\end{tabular}
\caption{Special values of the polynomials}
\end{center}
\end{table}
\noindent In Table~$1$, $\delta_{i,j}$ is $1$ if $i=j$ and $0$ otherwise;
${\cal S}_r$ is the $r$-th Secant number~\cite{rpb242}, defined by
\[
\sec x = \frac{1}{\cos x} = \sum_{r \ge 0} {\cal S}_r
           \frac{x^{2r}}{(2r)!}\,\raisedot
\]
The values
$(-)^r\Pbar_r(0)$ are OEIS sequence A009843, 
and are given by the egf
\begin{equation}		\label{eq:Pbar_gf}
\sum_{r=0}^\infty \Pbar_r(0)\frac{x^{2r+1}}{(2r+1)!}
 = \frac{x}{\cosh x}\,\raisedot
\end{equation}
They may be expressed in terms of the 
Secant numbers ${\cal S}_r$, which comprise OEIS sequence A000364.
In view of~\eqref{eq:Pbar_F}, we obtain a special value of
the Dumont-Foata polynomials: 
\begin{equation}		\label{eq:F_half_half_half}
F_{r+1}\left(\half,\half,\half\right) = 2^{-2r}(2r+1){\cal S}_r.
\end{equation}

The values $\Qbar_r(1)$ comprise OEIS sequence A054879.
The values in the last row of Table~$1$ may also be found in OEIS:
they are sequences A000142, A001147, A047053, and A001813.

Tuenter~\cite{Tuenter1} observed that, for $r \ge 1$, the constant
terms of $-P_r(n)/n$ are the
Genocchi numbers (A001469),
and the constant terms of $(-)^{r-1}Q_r(n)/n$ are the
reduced tangent numbers (A002105). 
\end{document}